
\documentclass[12pt,reqno]{amsart}

\usepackage[dvips]{graphicx}
\usepackage[arrow,matrix,curve]{xy} 

\usepackage{amssymb, latexsym, amsmath, amscd, array, 
}

\newcommand\Z {{\mathbb Z}} 
\newcommand\R {{\mathbb R}} 
\newcommand\sys{{\rm Sys}} 
\newcommand\area{{\rm Area}} 
\newcommand\length{{\rm length}}

\topmargin=-.5in

\newtheorem{theorem}{Theorem}[section]
\newtheorem{lemma}[theorem]{Lemma}
\newtheorem{proposition}[theorem]{Proposition}
\newtheorem{corollary}[theorem]{Corollary}

\theoremstyle{definition}

\newtheorem{remark}[theorem]{Remark}
\newtheorem{question}[theorem]{Question}

\numberwithin{equation}{section}
\numberwithin{figure}{section} 
\numberwithin{table}{section}

\newcommand\var {{\rm Var}}
\newcommand\gmetric{{g}}

\newcommand\C{{\mathbb {C}}}

\DeclareMathOperator{\LB}{{LB}}
\DeclareMathOperator{\trace}{{\rm trace}}
\newcommand{\Hess}{{\rm Hess}}
\DeclareMathOperator{\TTT}{{T}}
\newcommand\T {{\mathbb T}} 

\DeclareMathOperator{\av}{{av}}
\newcommand{\fla}{{f_{\LA}}} 
\DeclareMathOperator{\LA}{{LA}}

\begin{document}

\title{Liouville's equation for curvature and systolic defect}

\author{Mikhail Katz}

\address{Department of Mathematics, Bar Ilan University, Israel}
\email{katzmik@macs.biu.ac.il}

\footnotetext{Supported by the Israel Science Foundation grant
1294/06}

\subjclass[2000]{Primary
53C23;            
Secondary 30F10,  
35J60,            
58J60
}

\keywords{Bonnesen's inequality, Liouville equation for curvature,
Loewner's torus inequality, systole, isosystolic defect, variance}

\bigskip\bigskip\bigskip\noindent

\begin{abstract}
We analyze the probabilistic variance of a solution of Liouville's
equation for curvature, given suitable bounds on the Gaussian
curvature.  The related systolic geometry was recently studied by
Horowitz, Katz, and Katz in~\cite{HKK}, where we obtained a
strengthening of Loewner's torus inequality containing a ``defect
term", similar to Bonnesen's strengthening of the isoperimetric
inequality.  Here the analogous isosystolic defect term depends on the
metric and ``measures" its deviation from being flat.  Namely, the
defect is the variance of the function $f$ which appears as the
conformal factor expressing the metric on the torus as
$f^2(x,y)(dx^2+dy^2)$, in terms of the flat unit-area metric in its
conformal class.  A key tool turns out to be the computational formula
for probabilistic variance, which is a kind of a sharpened version of
the Cauchy-Schwartz inequality.
\end{abstract}

\maketitle 

\tableofcontents

\section{Liouville's equation for Gaussian curvature}

Given a function~$f$ satisfying Liouville's equation for curvature in
a domain, we are interested in studying lower bounds for the
probabilistic variance of~$f$, or more precisely, for the average
square deviation of~$f$ from its mean, in the domain.

A geometric application we have in mind is obtaining lower bounds for
the variance of the conformal factor~$f$ in a fundamental domain of a
doubly-periodic metric~$f^2(x,y)(dx^2+dy^2)$ in the plane, and hence
for the isosystolic defect in Loewner's inequality for the
corresponding torus (see Section~\ref{two}).  

Liouville's equation for curvature is usually stated in terms of the
Laplace-Beltrami operator~$\Delta_{\LB}$ on a surface with a
metric~$f^2(x,y) ds^2$, where~$ds^2=dx^2+dy^2$.  In isothermal
coordinates~$(x,y)$, the operator is given
by~$\Delta_{\LB}^{\phantom{I}}=
\frac{1}{f^2}\left(\frac{\partial^2}{\partial
x^2}+\frac{\partial^2}{\partial y^2} \right)$.  We obtain the
following form of Liouville's equation, see \cite{DFN},
\cite[p.~26]{SGT} for details.

\begin{theorem}[Liouville's equation]
The Gaussian curvature~$K=K(x,y)$ of the metric~$f^2(x,y)(dx^2+dy^2)$
is minus the Laplace-Beltrami operator of the~$\log$ of the conformal
factor~$f$:
\begin{equation}
\label{232} 
K=-\Delta_{\LB}^{\phantom{I}}\log f.
\end{equation}
\end{theorem}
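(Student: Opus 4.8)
The plan is to reduce the statement to a direct computation of the Gaussian curvature of the conformal metric $f^2(dx^2+dy^2)$ in terms of the flat Laplacian, and then to recognize the answer as $-\Delta_{\LB}\log f$ via the stated formula $\Delta_{\LB}=\frac{1}{f^2}\left(\frac{\partial^2}{\partial x^2}+\frac{\partial^2}{\partial y^2}\right)$ for the Laplace--Beltrami operator in isothermal coordinates. Writing $\phi=\log f$, so that the metric is $e^{2\phi}(dx^2+dy^2)$, the goal becomes the identity $K=-\frac{1}{f^2}\left(\phi_{xx}+\phi_{yy}\right)$, since the right-hand side is exactly $-\Delta_{\LB}\log f$.

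I would obtain this identity from Cartan's structure equations. Take the orthonormal coframe $\omega^1=f\,dx$, $\omega^2=f\,dy$, so that $f^2(dx^2+dy^2)=(\omega^1)^2+(\omega^2)^2$. The first structure equation $d\omega^i+\omega^i_j\wedge\omega^j=0$, together with the skew-symmetry $\omega^1_2=-\omega^2_1$ of the Levi-Civita connection form in an orthonormal frame, determines $\omega^1_2$ uniquely. Computing $d\omega^1=df\wedge dx=f_y\,dy\wedge dx$ and $d\omega^2=df\wedge dy=f_x\,dx\wedge dy$ and matching coefficients gives $\omega^1_2=\phi_y\,dx-\phi_x\,dy$. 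The second structure equation then yields the curvature two-form $\Omega^1_2=d\omega^1_2+\omega^1_k\wedge\omega^k_2=d\omega^1_2$, the wedge term vanishing in dimension two, while in dimension two $\Omega^1_2=K\,\omega^1\wedge\omega^2=Kf^2\,dx\wedge dy$. On the other hand, $d\omega^1_2=d(\phi_y\,dx-\phi_x\,dy)=-(\phi_{xx}+\phi_{yy})\,dx\wedge dy$. Equating the two expressions gives $Kf^2=-(\phi_{xx}+\phi_{yy})$, that is, $K=-\Delta_{\LB}\log f$.

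The only genuine content is the structure-equation computation; the rest is bookkeeping and the substitution $\phi=\log f$. The point to watch is the sign convention in the second structure equation (equivalently, in the definition of $K$ through the Riemann tensor), since an error there flips the sign of the conclusion; I would pin down conventions so that $\Omega^1_2=K\,\omega^1\wedge\omega^2$ reproduces $K\equiv 1$ for the round metric $\frac{4}{(1+x^2+y^2)^2}(dx^2+dy^2)$ of the unit sphere as a sanity check. An alternative, classical route avoids moving frames entirely: substitute $E=G=f^2$, $F=0$ into Brioschi's formula for the Gaussian curvature of a metric $E\,dx^2+2F\,dx\,dy+G\,dy^2$ — this is the computation recorded in \cite{DFN} and \cite[p.~26]{SGT} — and simplify, arriving again at $K=-\frac{1}{2f^2}\Delta\log f^2=-\frac{1}{f^2}\Delta\log f$.
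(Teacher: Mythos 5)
Your derivation is correct. The paper itself gives no proof of this theorem: it simply records the formula and refers the reader to \cite{DFN} and \cite[p.~26]{SGT}, so there is no internal argument to compare against. What you supply is a complete, self-contained verification via Cartan's moving frames: the coframe $\omega^1=f\,dx$, $\omega^2=f\,dy$, the connection form $\omega^1_2=\phi_y\,dx-\phi_x\,dy$ (which does satisfy the first structure equations, as one checks by matching coefficients exactly as you describe), and $\Omega^1_2=d\omega^1_2=-(\phi_{xx}+\phi_{yy})\,dx\wedge dy=Kf^2\,dx\wedge dy$, giving $K=-\tfrac{1}{f^2}\Delta_0\log f=-\Delta_{\LB}\log f$. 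Your concern about the sign convention is the right one to flag, and the sanity check against the round metric $\tfrac{4}{(1+x^2+y^2)^2}(dx^2+dy^2)$ does come out to $K=1$ with your conventions, so the sign is pinned down. The Brioschi-formula route you mention as an alternative is essentially the computation recorded in the cited references, so either path lands where the paper wants to be; the moving-frames argument is arguably cleaner since with $E=G=f^2$, $F=0$ most of Brioschi's determinant collapses anyway.
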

In terms of the {\em flat\/} Laplacian~$\Delta_0(h)=\trace\Hess(h)$,
the equation can be written as follows:~$-\Delta_0\log f=Kf^2$.
Setting~$h=\log f$, we obtain yet another equivalent
form,~$-\Delta_0h=Ke^{2h}$.

This equation in the case of constant curvature~$K$ is called
Liouville's equation in \cite[p.~118]{DFN}, see also Rogers and
Schief~\cite[p.~154]{RS} (where it appears as the first
Gauss-Mainardi-Codazzi equation), and \cite{LS,
Po,Ve}.

Liouville's equation~$K=-\Delta_{\LB} \log f$ can be written as
follows in terms of partial
derivatives:
\begin{equation}
\label{12}
-f(f_{xx}+f_{yy})+(f_x^2+f_y^2)=Kf^4,
\end{equation}
in other words~$-f\Delta_0 f+|\nabla f|^2=Kf^4$.  The flat Laplacian
can be written as follows in polar coordinates:
\begin{equation}
\label{32}
\Delta_0 f = \frac{\partial^2 f}{\partial
r^2}+\frac{1}{r}\frac{\partial f}{\partial
r}+\frac{1}{r^2}\Delta_{S^1}f,
\end{equation}
where~$\Delta_{S^1}f$ is the Laplace-Beltrami operator on the circle.

\section{A geometric application}
\label{two}

In this section, we present a differential-geometric application of
lower bounds for the variance of a solution of Liouville's equation
for curvature.  The reader mostly interested in Liouville's equation
itself can skip to the next section.  

The {\em systole\/} of a compact metric space~$X$ is a metric
invariant of~$X$, defined to be the least length of a noncontractible
loop in~$X$.  We will denote it~$\sys=\sys(X)$, cf.
M.~Gromov~\cite{Gr1, Gr3}.  When~$X$ is a graph, the invariant is
usually referred to as the {\em girth\/}, ever since W.~Tutte's
article~\cite{Tu}.  C.~Loewner proved his systolic inequality
%
%
$\area(\gmetric) - \tfrac{\sqrt{3}}{2} \sys(\gmetric)^2 \geq 0$ for
the torus~$(T^2,g)$ in 1949, as reported by Pu \cite{Pu}.

The classical Bonnesen inequality from 1921 is the strengthened
isoperimetric inequality~$ L^2 - 4\pi A \geq \pi^2(R-r)^2$, see
\cite[p.~3]{B-Z88}, \cite{Bo}.  Here~$A$ is the area of the region
bounded by a closed Jordan curve of length (perimeter)~$L$ in the
plane,~$R$ is the circumradius of the bounded region, and~$r$ is its
inradius.  The error term~$\pi^2(R-r)^2$ on the right hand side of
Bonnesen's inequality is traditionally called the {\em isoperimetric
defect}.

Loewner's torus inequality can be similarly strengthened, by
introducing a ``defect" term \`a la Bonnesen.  If we use conformal
representation to express the metric~$\gmetric$ on the torus~$T^2$
as~$f^2 (dx^2+dy^2)$ with respect to a unit area flat metric on the
torus $\R^2/L$ (see below), then the defect term in question is the
variance of the conformal factor~$f$ above.  Then the inequality with
the defect term looks as follows:
\[
\area-\tfrac{\sqrt{3}}{2}\sys^2\geq\var(f).
\]
Is there a geometrically meaningful estimate for the systolic defect
term in Loewner's torus inequality?  Thus, one could look for lower
bounds for the variance of the conformal factor~$f$ of the metric, in
terms of some curvature conditions, say if the curvature is bounded
away from zero on a region~$D$ whose area is bounded below.
Liouville's equation for Gaussian curvature~$K$ is~$-\Delta\log f = K
f^2$.  One is led to the following problem in connection with
Liouville's equation.  The solutions in the constant curvature case
are of the form~$f=\frac{|a'(z)|}{1+|a(z)|^2}$ where~$a(z)$ is a
holomorphic function on a disk (here the curvature, assumed constant,
is normalized to the value +4).  One seeks lower bounds for the
variance of~$f$.  Here a lower bound for the area of the region~$D$
translates into a lower bound for the~$L^2$ norm
of~$\frac{|a'(z)|}{1+|a(z)|^2}$.


Recall that the uniformisation theorem in the genus~$1$ case can be
formulated as follows: For every metric~$\gmetric$ on
the~$2$-torus~$T^2$, there exists a lattice~$L\subset \R^2$ and a
positive~$L$-periodic function~$f(x,y)$ on~$\R^2$ such that the
torus~$(T^2,\gmetric)$ is isometric to~$\left( \R^2/L, f^2
ds^2\right)$, where~$ds^2=dx^2+dy^2$ is the standard flat metric
of~$\R^2$.  Similarly to the isoperimetric inequality, Loewner's torus
inequality relates the total area, to a suitable~$1$-dimensional
invariant, namely the systole, i.e., least length of a noncontractible
loop on the torus~$(T^2, \gmetric)$:
\begin{equation}
\label{11L} 
\area(\gmetric) - \tfrac{\sqrt{3}}{2} \sys(\gmetric)^2 \geq 0.
\end{equation}
In analogy with Bonnesen's inequality, there exists a
following version of Loewner's torus inequality with an error term:
\begin{equation}
\label{13}
\area(\gmetric) - \tfrac{\sqrt{3}}{2} \sys(\gmetric)^2 \geq \var(f),
\end{equation}
see \cite{HKK}.  Here the error term, or {\em isosystolic defect}, is
given by the variance~$\var(f)=\int_{T^2} (f-m)^2$ of the conformal
factor~$f$ of the metric~$\gmetric=f^2\gmetric_0$ on the torus,
relative to the unit area flat metric~$\gmetric_0$ in the same
conformal class.  Here~$m=\int_{T^2}f$ is the mean of~$f$.  More
concretely, if~$(T^2,\gmetric_0) = \R^2/L$, where~$L$ is a lattice of
unit coarea, and~$D$ is a fundamental domain for the action of~$L$
on~$\R^2$ by translations, then the mean can be written as~$m=\int_D
f(x,y) dxdy$, where~$dxdy$ is the standard measure of~$\R^2$.
\begin{question}
Unlike Bonnesen's inequality where the error term has clear geometric
significance, the error term in \eqref{13} is of an analytic nature.
It would be interesting to obtain a lower bound whose geometric
significance is more transparent.  Can such a bound be expressed in
terms of suitable curvature bounds?
\end{question}

The proof of inequalities with isosystolic defect relies upon the
computational formula for the variance of a random variable in terms
of expected values.  Keeping our differential geometric application in
mind, we will denote the random variable~$f$.  Namely, we have the
formula
\begin{equation}
\label{11}
E_\mu(f^2) - \left(E_\mu(f) \right)^2 = \var(f),
\end{equation}
where~$\mu$ is a probability measure.  Here the variance is~$ \var(f)=
E_\mu \left( (f-m)^2 \right)$, where~$m=E_\mu(f)$ is the expected
value (i.e., the mean).

Now consider a flat metric~$\gmetric_0$ of unit area on the~$2$-torus
$T^2$.  Denote the associated measure by~$\mu$.  Since~$\mu$ is a
probability measure, we can apply formula~\eqref{11} to it.  Consider
a metric~$\gmetric = f^2 \gmetric_0$ conformal to the flat one, with
conformal factor~$f>0$, and new measure~$f^2\mu$.  Then we have~$
E_\mu (f^2) = \int_{T^2} f^2 \mu = \area (\gmetric). ~$
Equation~\eqref{11} therefore becomes
\begin{equation}
\label{21}
\area(\gmetric) - \left( E_\mu(f) \right)^2 = \var(f).
\end{equation}
Next, one relates the expected value~$E_\mu(f)$ to the systole of the
metric~$\gmetric$.  Recall that the first successive
minimum,~$\lambda_1(L)$ is the least length of a nonzero vector
in~$L$.  The lattice of the {\em Eisenstein integers\/} is the lattice
in~$\C$ spanned by the elements~$1$ and the sixth root of unity.  To
visualize this lattice, start with an equilateral triangle in~$\C$
with vertices~$0$,~$1$, and~$\tfrac{1}{2}+i\tfrac{\sqrt{3}}{2}$, and
construct a tiling of the plane by repeatedly reflecting in all sides.
The Eisenstein integers are by definition the set of vertices of the
resulting tiling.  The maximal ratio~$\frac{(\lambda_1)^2}{\area}$
(the area is that of a fundamental domain) for a lattice in the plane
is~$\gamma_2=\frac{2}{\sqrt{3}}=1.1547\ldots$.  The corresponding
critical lattice is homothetic to the~$\Z$-span of the cube roots of
unity in~$\C$, i.e., the Eisenstein integers.  This result yields a
proof of Loewner's torus inequality for the metric~$\gmetric = f^2
\gmetric_0$ using the computational formula for the variance.  Let us
analyze the expected value term~$E_\mu(f) = \int_{T^2} f \mu$ in
\eqref{21}.  Indeed, the lattice of deck transformations of the flat
torus~$\gmetric_0$ admits a~$\Z$-basis similar to~$\{\tau, 1\} \subset
\C$, where~$\tau$ belongs to the standard fundamental domain.  In
other words, the lattice is similar to~$ \Z\tau + \Z1 \subset \C. ~$
Consider the imaginary part~$\Im(\tau)$ and set~$
\sigma^2:=\Im(\tau)>0. ~$ From the geometry of the fundamental domain
it follows that~$\sigma^2 \geq \tfrac{\sqrt{3}}{2}$, with equality if
and only if~$\tau$ is the primitive cube or sixth root of unity.
Since~$\gmetric_0$ is assumed to be of unit area, the basis for its
group of deck tranformations can therefore be taken to
be~$\{\sigma^{-1}\tau,\sigma^{-1}\}$,
where~$\Im({\sigma}^{-1}{\tau})=\sigma$.  With these normalisations,
we see that the flat torus is ruled by a pencil of horizontal closed
geodesics, denoted~$\gamma_y=\gamma_y(x)$, each of
length~$\sigma^{-1}$, where the ``width'' of the pencil
equals~$\sigma$, i.e. the parameter~$y$ ranges through the
interval~$[0,\sigma]$, with~$\gamma_\sigma=\gamma_0$.  By Fubini's
theorem, we obtain the following lower bound for the expected
value:~$E_\mu(f) = \int_0^\sigma \left( \int_{\gamma_y} f(x)dx \right)
dy = \int_0^\sigma \length(\gamma_y)dy \geq \sigma \sys(\gmetric),~$
see \cite[p.~41, 44]{SGT} for details.  Substituting into \eqref{21},
we obtain the inequality
\begin{equation}
\label{51b}
\area(\gmetric) - \sigma^2 \sys(\gmetric)^2 \geq \var(f),
\end{equation}
where~$f$ is the conformal factor of the metric~$\gmetric$ with
respect to the unit area flat metric~$\gmetric_0$.  Since~$\sigma^2
\geq \tfrac{\sqrt{3}}{2}$, we obtain in particular Loewner's torus
inequality with isosystolic defect~\eqref{13}.  Hence, a metric
satisfying the boundary case of equality in Loewner's torus
inequality~\eqref{11L} is necessarily flat and homothetic to the
quotient of~$\R^2$ by the lattice of Eisenstein integers.  Indeed, if
a metric~$f^2 ds^2$ satisfies the boundary case of equality in
\eqref{11L}, then the variance of the conformal factor~$f$ must vanish
by \eqref{13}.  Hence~$f$ is a constant function.

If~$\tau$ is pure imaginary, i.e., the lattice~$L$ is a rectangular
lattice of coarea~$1$, then the metric~$\gmetric=f^2 \gmetric_0$
satisfies the inequality
\begin{equation}
\label{52z}
\area(\gmetric) - \sys(\gmetric)^2 \geq \var(f).
\end{equation}
Indeed, if~$\tau$ is pure imaginary then~$\sigma\geq 1$, and the
inequality follows from \eqref{51b}.  In particular, every surface of
revolution satisfies \eqref{52z}, since its lattice is rectangular.

Lower bounds for the variance of the conformal factor yield lower
bounds for the isosystolic defect, see next section.

\section{Rotationally invariant case}

If~$f$ is rotationally invariant, then~$\Delta_0 f = \frac{\partial^2
f}{\partial r^2} + \frac{1}{r} \frac{\partial f}{\partial r}$ from
\eqref{32}.  Thinking of~$f$ as a single-variable
function~$f(r,\theta)=f(r)$, we obtain
\begin{equation}
\label{86}
\Delta_0 f = f''(r) + \frac{1}{r} f'(r).
\end{equation}
Liouville's equation then becomes an ordinary differential
equation
\[
-f\left(f''(r)+\frac{1}{r}f'(r) \right) + (f'(r))^2 = K f^4(r),
\]
cf.~\eqref{12}.  The substitution~$\zeta=r^2$ results in an equation
in the variable~$\zeta$, namely
\[
-f\TTT(f)+4\zeta(f'(\zeta))^2=Kf^4,
\]
where the relation between $f(r)$ and $f(\zeta)$ needs to be
explained (two different f's).

Alternatively, we can proceed as follows.  Instead of the substitution
above, we study the variance of the product~$rf(r)$, where~$f$ is the
conformal factor.  Thus we retain the variable~$r$.  Is there a
convenient form of the equation for this new function~$rf(r)$?
However, the natural~$L^2$ normalisation is for~$\int f^2(r) r dr
d\theta$, hence here we need to integrate~$rf^2(r)$.  In other words
we are integrating~$rf(r)$ with a weight function~$1/r$.  We consider
again the operator~\eqref{86}, which we now
denote~$\TTT$:~$\TTT=\frac{d^2}{dr^2}+\frac{1}{r}\frac{d}{dr}$,
where~$r\geq 0$.  Applying the change of variable~$\zeta=r^2$, we can
write~$\TTT$ as~$\TTT= 4 \frac{d}{d\zeta} \zeta \frac{d}{d\zeta}$.
Furthermore the substitution~$\zeta=e^t$ allows us to write the
operator as
\begin{equation}
\label{102b}
\TTT= \frac{4}{\zeta} \frac{d^2}{dt^2}.
\end{equation}
Lower bounds for the second derivative lead easily to estimates for
the variance, expressed by the following Lemma: If the curvature is
positive, then the function~$f(r)$ is decreasing, while~$\log f$ is
concave with respect to the variable~$t$.  Indeed, rewriting the
equation~$- \T \log f = K f^2$ as~$ - \frac{d^2 \log f}{dt^2} =
\frac{\zeta}{4}K f^2~$ immediately implies the concavity of~$\log f$
with respect to~$t$.  Integrating with respect to~$t$, we obtain~$ -
\frac{d \log f}{dt} = \int \frac{e^t}{4}K f^2 >0~$ if the curvature is
positive.  Hence~$\log f$ is decreasing, and therefore so is~$f$
itself.  The significance of the variable~$\zeta$ stems from the
following elementary fact.
\begin{lemma}
\label{sss}
The variance of a rotationally invariant conformal factor~$f$ on a
disk is proportional to the variance of the corresponding single
variable function, with respect to the variable~$\zeta=r^2$.
\end{lemma}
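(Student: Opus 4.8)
The plan is to reduce the statement to a single elementary change of variables in the radial integral. Fix the disk $D$ of radius $R$ centred at the origin, equipped with the flat (Euclidean) area measure, exactly as in the toral situation of Section~\ref{two}; after normalising we may take $\mu=\frac{1}{\pi R^2}\,dxdy$ to be the associated flat probability measure on $D$. Since $f$ is rotationally invariant we write $f=f(r)$ and define $g(\zeta):=f(\sqrt{\zeta})$ for $\zeta\in[0,R^2]$; this $g$ is the ``corresponding single variable function'' referred to in the statement, and $\zeta=r^2$ is the variable of~\eqref{102b}.

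First I would pass to polar coordinates. Because the integrands of $E_\mu(f)$ and $E_\mu(f^2)$ depend on $r$ alone, the angular integration contributes only a factor $2\pi$, so that $E_\mu(f)=\frac{2}{R^2}\int_0^R f(r)\,r\,dr$ and similarly for $E_\mu(f^2)$. Next I would perform the substitution $\zeta=r^2$, under which $r\,dr=\tfrac12\,d\zeta$ and the interval $r\in[0,R]$ corresponds to $\zeta\in[0,R^2]$. This is the crux: the Jacobian converts the weighted measure $r\,dr$ into one half of ordinary Lebesgue measure $d\zeta$, giving $E_\mu(f)=\frac{1}{R^2}\int_0^{R^2} g(\zeta)\,d\zeta$, i.e. $E_\mu(f)$ equals the mean $\bar g$ of $g$ with respect to the uniform probability measure $\nu=\frac{1}{R^2}\,d\zeta$ on $[0,R^2]$, and likewise $E_\mu(f^2)=E_\nu(g^2)$.

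Finally I would apply the computational formula for the variance~\eqref{11} twice, once to $f$ under $\mu$ and once to $g$ under $\nu$. Since $E_\mu(f)=E_\nu(g)$ and $E_\mu(f^2)=E_\nu(g^2)$, formula~\eqref{11} yields $\var(f)=\var_\nu(g)$; if instead one uses the unnormalised variance $\int_D (f-m)^2\,dxdy$, the same two steps give $\int_D (f-m)^2\,dxdy=\pi\int_0^{R^2}(g(\zeta)-\bar g)^2\,d\zeta$, which is the asserted proportionality (with constant $\pi$, or $2\pi$ under the other common normalisation of $\var$).

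There is essentially no analytic obstacle; the one point deserving a word of care is that the substitution $\zeta=r^2$ must be carried out consistently inside both $E_\mu(f^2)$ and $(E_\mu(f))^2$, so that the mean subtracted off in the variance of $g$ is precisely the image $\bar g$ of $m=E_\mu(f)$ — which is exactly what the computation above checks. Thus the content of the lemma is just that the Jacobian $r\,dr=\tfrac12\,d\zeta$ is a constant multiple of Lebesgue measure, the very same feature that makes $\zeta=r^2$ the natural variable for the operator $\TTT$ in~\eqref{102b}.
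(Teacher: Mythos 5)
Your proposal is correct and is essentially the paper's own argument spelled out in full: the paper's proof consists precisely of the observation that $\tfrac{1}{2}\,d\zeta = r\,dr$ is the radial measure inherited from polar coordinates, which is the Jacobian computation at the heart of your write-up. Your additional care in checking that both $E_\mu(f)$ and $E_\mu(f^2)$ transform consistently, so that the computational formula~\eqref{11} transports the variance, is a welcome but routine elaboration of the same idea.
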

Indeed, The proof is immediate from the fact that~$\frac{1}{2}
d\zeta=rdr$ is the line measure inherited from polar coordinates.
Liouville's equation for a rotationally invariant conformal
factor~$f=f(\zeta(r))$ can be rewritten as~$ 4 \frac{d}{d\zeta} \zeta
\frac{d}{d\zeta} \log f = -K f^2. ~$ In terms of the reciprocal
function~$\phi=\frac{1}{f}$, this becomes~$ 4 \phi^2 \frac{d}{d\zeta}
\zeta \frac{d}{d\zeta} \log \phi = K . ~$ Note that the {\em linear\/}
function~$\phi(\zeta)=1+\frac{K}{4} \zeta$ solves this equation, as
pointed out by B. Riemann, cf.  \eqref{14}.

\begin{question}
Can one translate the differential inequality
\[
4 \phi^2 \frac{d}{d\zeta} \zeta \frac{d}{d\zeta} \log \phi \geq K
\]
into a geometric condition involving a comparison of an arbitrary
solution, with Riemann's linear solution?
\end{question}

\section{Averaging~$h=\log f$ respects the differential inequality}

If the curvature satisfies a lower bound~$K(x,y)\geq \alpha$,
Liouville's equation yields a differential inequality
\begin{equation}
\label{83}
- \Delta_0 h\geq \alpha e^{2h(r)}.
\end{equation}

Given a disk in~$\R^2$ where~$h$ is defined, we can average~$h$ by the
circle of rotations about the center of the disk to obtain a
rotationally symmetric function~$h_{\av}(r)=h_{\av}(r, \theta)$ on the same
disk, where~$\theta$ is the polar angle.

\begin{proposition}
\label{83l}
If~$h$ satisfies the differential inequality~\eqref{83} then its
rotationally symmetric average~$h_{\av}(r)$ satisfies the ordinary
differential inequality
\begin{equation}
-\left (h_{\av}''(r) + \tfrac{1}{r} h_{\av}'(r) \right) \geq \alpha
e^{h_{\av}},
\end{equation}
where~$\alpha$ is a lower bound for Gaussian curvature.
\end{proposition}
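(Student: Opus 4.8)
The plan is to average the differential inequality \eqref{83} over the circles of radius $r$ about the center of the disk, exploiting two facts: that circular averaging intertwines the flat Laplacian $\Delta_0$ with the radial operator $\TTT=\frac{d^2}{dr^2}+\frac1r\frac{d}{dr}$ of \eqref{86}, and that $t\mapsto e^t$ is convex, so Jensen's inequality moves the average inside the exponent in the favorable direction. Throughout I assume, as in the intended applications, that the lower curvature bound is nonnegative, $\alpha\ge 0$.

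First I would record the averaging identity for the Laplacian. Writing $h_{\av}(r)=\frac{1}{2\pi}\int_0^{2\pi}h(r,\theta)\,d\theta$ and using the polar form \eqref{32}, namely $\Delta_0 h=h_{rr}+\tfrac1r h_r+\tfrac{1}{r^2}\Delta_{S^1}h$, I integrate in $\theta$ over $[0,2\pi]$. The angular term drops out, since $\int_0^{2\pi}\Delta_{S^1}h\,d\theta=\int_0^{2\pi}h_{\theta\theta}\,d\theta=0$ by periodicity of $h$ in $\theta$; and differentiation under the integral sign (legitimate as $h=\log f$ is smooth on a slightly smaller closed disk) converts the remaining two terms into $h_{\av}''(r)$ and $\tfrac1r h_{\av}'(r)$. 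Thus
\[
\frac{1}{2\pi}\int_0^{2\pi}(\Delta_0 h)(r,\theta)\,d\theta \;=\; h_{\av}''(r)+\tfrac1r h_{\av}'(r),
\]
which by \eqref{86} is just $\Delta_0 h_{\av}(r)$.

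Next I would average the inequality itself. By \eqref{83}, pointwise $-\Delta_0 h\ge\alpha e^{2h}$; integrating over $\theta$ and using the identity above gives
\[
-\Bigl(h_{\av}''(r)+\tfrac1r h_{\av}'(r)\Bigr)\;\ge\;\frac{\alpha}{2\pi}\int_0^{2\pi}e^{2h(r,\theta)}\,d\theta .
\]
Jensen's inequality, applied with the probability measure $\frac{1}{2\pi}\,d\theta$ and the convex function $\exp$, yields $\frac{1}{2\pi}\int_0^{2\pi}e^{2h}\,d\theta\ge\exp\!\bigl(\tfrac{1}{2\pi}\int_0^{2\pi}2h\,d\theta\bigr)=e^{2h_{\av}(r)}$; multiplying by $\alpha\ge0$ preserves the inequality, so
\[
-\Bigl(h_{\av}''(r)+\tfrac1r h_{\av}'(r)\Bigr)\;\ge\;\alpha\,e^{2h_{\av}(r)},
\]
which is the inequality of the Proposition, the right-hand exponent being read as $2h_{\av}$, consistent with the exponent $2h$ on the right-hand side of \eqref{83}.

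I expect the Jensen step --- passing, in the correct direction, from the angular average of $e^{2h}$ to the exponential of the angular average of $h$ --- to be the substantive point; the averaged-Laplacian identity is routine once the interchange of $\partial_r$ and $\int d\theta$ is justified. The hypothesis that is genuinely used is the sign condition $\alpha\ge 0$: it is exactly what makes $u\mapsto\alpha e^{2u}$ convex and nondecreasing, hence what makes circular averaging respect the supersolution inequality \eqref{83}. For a negative lower curvature bound the last step reverses and the conclusion can fail, so the Proposition should be understood in the positive-curvature regime of the preceding section.
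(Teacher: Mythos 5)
Your proof is correct and follows essentially the same route as the paper: the paper's own (terser) argument likewise uses that rotational averaging commutes with the flat Laplacian and then applies Jensen's inequality to the convex exponential function, concluding with $\alpha e^{2h_{\av}}$ via \eqref{86}. Your two side remarks --- that the exponent in the stated conclusion should read $2h_{\av}$, and that $\alpha\ge 0$ is needed in the final step --- are both accurate.
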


\begin{proof}
By linearity of the flat Laplacian, we obtain from \eqref{83}~$
-\Delta_0(h_{\av}) = -\av \left( \Delta_0 h \right) = \av \left( K e^{2h}
\right) \geq \alpha \av \left(e^{2h} \right) \geq \alpha e^{2h_{\av}},
$ by Jensen's inequality applied to the exponential function.
Applying~\eqref{86}, we complete the proof of Proposition~\ref{83l}.
\end{proof}
The logarithmic average~$\fla$ is by definition the function~$ \fla=
e^{\av(\log f)}$.  The above proposition can be restated in terms of
the logarithmic average as follows.
\begin{corollary}
If the metric~$f^2ds^2$ admits a lower bound~$K \geq \alpha$ on a
disk, then the logarithmic average~$\fla$ satisfies the differential
inequality~$ -\Delta_{\LB} \log \fla \geq \alpha . ~$
\end{corollary}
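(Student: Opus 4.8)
The plan is to deduce the corollary directly from Proposition~\ref{83l} by dividing through by the square of the conformal factor. First I would unwind the definition of the logarithmic average: since $\fla=e^{\av(\log f)}$, we have $\log\fla=\av(\log f)=\hav$, where $h=\log f$. Because $\hav$ is rotationally symmetric, its flat Laplacian is given by the radial expression~\eqref{86}, so the inequality established in the proof of Proposition~\ref{83l} reads
\[
-\Delta_0(\log\fla)=-\bigl(\hav''+\tfrac{1}{r}\hav'\bigr)\geq\alpha\,e^{2\hav}.
\]

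Next I would pass from the flat Laplacian to the Laplace--Beltrami operator of the rotationally symmetric metric~$\fla^2 ds^2$. By the formula for~$\Delta_{\LB}$ in isothermal coordinates recalled in Section~1, one has $\Delta_{\LB}=\fla^{-2}\Delta_0$ for this metric, so dividing the displayed inequality by $\fla^2=e^{2\hav}$ gives
\[
-\Delta_{\LB}\log\fla=-\frac{1}{\fla^2}\Delta_0\log\fla\geq\frac{\alpha\,e^{2\hav}}{e^{2\hav}}=\alpha,
\]
which is the assertion. Equivalently, by Liouville's equation~\eqref{232} the left-hand side is precisely the Gaussian curvature of $\fla^2 ds^2$, so the corollary records the fact that logarithmic averaging does not decrease the curvature lower bound~$\alpha$.

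The argument is essentially a single algebraic step, so I do not anticipate any genuine obstacle; the only points requiring care are bookkeeping ones. One must use the inequality in the form $-\Delta_0\hav\geq\alpha e^{2\hav}$ that is actually produced in the proof of Proposition~\ref{83l} (with the factor~$2$ in the exponent of $e^{\hav}$), and one must keep in mind that the Laplace--Beltrami operator appearing in the statement is taken with respect to the averaged metric $\fla^2 ds^2$, not the original metric $f^2 ds^2$; it is exactly this choice that makes the conformal factor in $\Delta_{\LB}=\fla^{-2}\Delta_0$ cancel the exponential on the right-hand side.
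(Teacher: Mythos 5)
Your proof is correct and follows essentially the same route as the paper: the corollary is just Proposition~\ref{83l} (in the form $-\Delta_0 h_{\av}\geq\alpha e^{2h_{\av}}$ actually produced in its proof) rewritten via $\log\fla=h_{\av}$ and the conformal relation $\Delta_{\LB}=\fla^{-2}\Delta_0$ for the averaged metric. You also rightly flag the two bookkeeping points --- the factor $2$ in the exponent and the fact that $\Delta_{\LB}$ is taken with respect to $\fla^2\,ds^2$ --- which the paper leaves implicit.
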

Namely, at the level of the function~$h=\log f$, the differential
inequality~$ -\Delta_0 h \geq \alpha e^{2h}~$ averages well by
Jensen's inequality (see above).
\begin{remark}
One can find lower bounds for the variance of the conformal
factor~$f$, by relating the variances~$\var(f)$ and~$\var(\fla)$, and
developing variance estimates for a rotationally invariant function,
and applying them to the logarithmic average~$\fla$.
\end{remark}

\section{The effect of averaging on variance}

\begin{lemma}
Rotational averaging does not increase the variance.
\end{lemma}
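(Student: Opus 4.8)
The plan is to recognize rotational averaging as a conditional expectation---conditioning a function on the disk on its radial coordinate---and to invoke the elementary principle that conditioning never increases variance. I would set up notation as follows: let $D\subset\R^2$ be a disk of radius $R$ centered at the origin, equipped with the \emph{normalized} area measure $\mu$ (so that $\mu(D)=1$ and the computational formula~\eqref{11} applies), and for $f\in L^2(D,\mu)$ let $\av(f)(r)=\frac{1}{2\pi}\int_0^{2\pi}f(r,\theta)\,d\theta$ denote the rotational average. Everything then reduces to comparing first and second moments with respect to the \emph{same} measure $\mu$.

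First I would check that averaging preserves the mean: by Fubini's theorem, with $c=(\pi R^2)^{-1}$ the normalizing constant,
\[
E_\mu(f)=c\int_0^R\!\!\int_0^{2\pi}f(r,\theta)\,d\theta\,r\,dr=c\int_0^R \av(f)(r)\,2\pi r\,dr=E_\mu\bigl(\av(f)\bigr).
\]
Next I would show that averaging does not increase the second moment. Applying the Cauchy--Schwartz inequality (equivalently, Jensen's inequality for $t\mapsto t^2$) on each circle of radius $r$ gives
\[
\av(f)(r)^2=\left(\frac{1}{2\pi}\int_0^{2\pi}f(r,\theta)\,d\theta\right)^2\le \frac{1}{2\pi}\int_0^{2\pi}f(r,\theta)^2\,d\theta;
\]
multiplying by $2\pi r$, integrating in $r$ over $[0,R]$, and multiplying by $c$ yields $E_\mu(\av(f)^2)\le E_\mu(f^2)$. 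Combining this with the previous identity and the computational formula~\eqref{11},
\[
\var\bigl(\av(f)\bigr)=E_\mu\bigl(\av(f)^2\bigr)-\bigl(E_\mu(\av(f))\bigr)^2\le E_\mu(f^2)-\bigl(E_\mu(f)\bigr)^2=\var(f),
\]
as claimed.

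It is worth noting that the defect in the per-circle Cauchy--Schwartz inequality used above is precisely the variance of $f$ restricted to that circle, so the argument in fact yields the sharper identity $\var(f)-\var(\av(f))=2\pi c\int_0^R \var\bigl(f(r,\cdot)\bigr)\,r\,dr\ge 0$, a form of the law of total variance; I would record this for interpretive value. I do not expect a genuine obstacle here: the only point requiring care is that the domain be honestly rotationally invariant, so that averaging over circles stays inside $D$ and a single probability measure $\mu$ serves on both sides of the inequality. The analytic content is just the sharpened Cauchy--Schwartz inequality already exploited repeatedly in the paper.
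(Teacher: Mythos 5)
Your proof is correct and follows essentially the same route as the paper: Fubini to show the mean is preserved, then Jensen's inequality (equivalently Cauchy--Schwartz) for the squaring function on each circle to bound the second moment, and finally the computational formula~\eqref{11}. The conditional-expectation framing and the law-of-total-variance refinement are nice glosses, but the underlying computation is identical to the paper's.
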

Indeed, we claim that the rotational average~$h_{\av}$ of a function~$h$
on a disk~$D$ satisfies~$\var(h_{\av})\leq \var(h)$.  More specifically,
let~$ h_{\av}(r)=h_{\av}(r,\theta)= \frac{1}{2\pi} \int_0^{2\pi}
h(r,\theta)d\theta . ~$ Note that~$\int_0^{2\pi} h_{\av}(r) d\theta =
2\pi \frac{1}{2\pi} \int_0^{2\pi} h(r,\theta)d\theta = \int_0^{2\pi}
f(r,\theta)d\theta$.  Assume for simplicity that~$D$ has unit area.
Then
\begin{equation}
E(h_{\av}) 
=\int
rdr\left(\int_0^{2\pi}h_{\av}(r,\theta)d\theta\right)
=\int
rdr\left(\int_0^{2\pi}h(r,\theta)d\theta\right)
=E(h) .
\label{89}
\end{equation}
Now consider the term~$E(h^2)$.  We have~$ \int_0^{2\pi} h_{\av}^2(r)
d\theta = 2\pi h_{\av}^2(r) = 2\pi \left( \frac{1}{\pi} \int_0^{2\pi}
h(r,\theta)d\theta \right) ^2 . ~$ Since the squaring function is a
convex function, we can apply Jensen's inequality to obtain~$ \left(
\frac{1}{2\pi} \int h(r,\theta)d\theta \right) ^2 \leq \frac{1}{2\pi}
\int h^2(r,\theta)d\theta . ~$ Thus~$ \int_0^{2\pi} h_{\av}^2(r)d\theta
\leq 2\pi \frac{1}{2\pi} \int h^2(r,\theta)d\theta . ~$ Hence
\begin{equation}
E(h_{\av}^2) =\int\left(rdr\int_0^{2\pi}h_{\av}^2(r)d\theta\right) \leq
\int\left(rdr\int_0^{2\pi}h^2(r)d\theta \right) = E(f^2).
\label{810}
\end{equation}
Combining \eqref{89} and \eqref{810}, we obtain~$\var(h_{\av})=
E(h_{\av}^2)-E(h_{\av})^2=E(h_{\av}^2)-E(h)^2\leq E(h^2) - E(h)^2$, as
required.

\section{Some curvature estimates and Liouville's equation}

Assuming the existence of a region of positive Gaussian curvature
$K(x,y) \geq \alpha$ for the metric~$\gmetric$, we would like to
compare the variance of an arbitrary solution of Liouville's equation
\eqref{232}, to that of the standard rotationally invariant
solution~$f_0$ given by
\begin{equation}
\label{14}
f_0=\frac{1}{1+ \frac{\alpha}{4} r^2}
\end{equation}
already appearing in B.~Riemann's 1854 essay \cite{Ri}.

Given an metric~$\gmetric= f^2 ds^2$ with an arbitrary conformal
factor~$f>0$, we seek estimates of the following type.

We will denote by~$D(\rho)$ the disk of radius~$\rho > 0$ for the
background flat metric of unit area.

\begin{question}
\label{big}
Suppose the torus admits a region of positive Gaussian curvature~$K
\geq \alpha$, which is expressed by a conformal factor~$f$ of
normalized~$L^2$ norm on~$D(\rho)$.  We seek a lower bound for the
variance~$\var(f) \geq N(\rho) \ldots$ where~$N$ is an explicit
function of~$\rho$.
\end{question}

Applying inequalities~\eqref{13} and \eqref{51b}, we can then obtain
the following corollary: Let~$\tau$ be the parameter of the underlying
flat metric.  Then~$ \area(\gmetric) - \Im(\tau) \sys(\gmetric)^2 \geq
N(\rho) \ldots$. In other words, we seek an estimate only dependent on
the lower curvature bound and the size of the disk, modulo a
normalisation of the~$L^2$-norm of~$f$.  Can such a bound be obtained
by studying the reciprocal~$\phi$ instead of~$f$, and using the
convexity of the reciprocal function?  Is Riemann's solution
\eqref{14} optimal as far as the variance is concerned?


We will work with the hypothesis~$K(x,y)\geq \alpha >0$ on the
metric~$f^2 ds^2$ in a disk of radius~$2 \rho>0$.  We need a lower
bound for the variance of~$f=f(\zeta)$ with respect to the standard
measure~$d\zeta$, as already discussed above in Lemma~\ref{sss}.

\begin{lemma}
Assume the Gaussian curvature~$K(x,y)$ satisfies a lower bound~$K\geq
\alpha>0$.  Then the substitution~$\zeta=e^t$ results in a concave
decreasing function~$u(\zeta(t))$ in the variable~$t$.  The second
derivative~$\frac{d^2 u(\zeta(t))}{dt^2}$ is bounded away from zero on
the interval~$\zeta\in [\rho, 2\rho]$ as follows:
$
-\frac{d^2 u}{dt^2} \geq \frac{\alpha \rho}{4} f^2.
$
\end{lemma}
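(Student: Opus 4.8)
The plan is to reduce the statement to the one-variable computation already carried out in the section on the rotationally invariant case, and then read off the quantitative bound. First I would set $u=\log f$ and regard $f$ as a function of $\zeta=r^2$; if $f$ is not itself rotationally invariant one first replaces it by its rotational average, which by Proposition~\ref{83l} still satisfies the relevant differential inequality. Next I would invoke the identity $\TTT=\frac{4}{\zeta}\frac{d^2}{dt^2}$ from~\eqref{102b}, valid after the substitution $\zeta=e^t$, which converts Liouville's equation $-\TTT\log f=Kf^2$ into the ordinary differential equation $-\frac{d^2u}{dt^2}=\frac{\zeta}{4}Kf^2$ in the variable $t$.

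From this ODE the two qualitative claims are immediate. Since $f>0$ and $K\geq\alpha>0$, the right-hand side is strictly positive, so $\frac{d^2u}{dt^2}<0$ and $u$ is concave in $t$. For monotonicity I would integrate the ODE once starting from the center of the disk; there the first $t$-derivative of $u$ vanishes, because $\frac{d}{dt}=\zeta\frac{d}{d\zeta}=\frac{r}{2}\frac{d}{dr}$ and smoothness together with rotational symmetry force $f'(0)=0$, so $\frac{du}{dt}=\frac{r}{2}\frac{f'(r)}{f(r)}\to 0$ as $r\to 0^+$. Hence $\frac{du}{dt}(t)=-\int_{-\infty}^{t}\frac{e^s}{4}Kf^2\,ds<0$, so $u$, and therefore $f=e^u$, is decreasing in $t$. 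For the quantitative lower bound I would simply observe that on the interval $\zeta\in[\rho,2\rho]$ one has $\zeta\geq\rho$, whence $-\frac{d^2u}{dt^2}=\frac{\zeta}{4}Kf^2\geq\frac{\alpha\zeta}{4}f^2\geq\frac{\alpha\rho}{4}f^2$.

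I expect the only genuine subtlety to be the vanishing of the boundary term at $t\to-\infty$ in the integration step: this is where the regularity of $f$ at the origin and the rotational symmetry are actually used, whereas all the other manipulations are substitutions already performed earlier. A secondary point worth making explicit is whether ``$f$'' in the statement denotes the original conformal factor or its rotational (equivalently logarithmic) average $\fla$; in the averaged case one works with the inequality $-\Delta_{\LB}\log\fla\geq\alpha$ of the corollary to Proposition~\ref{83l} in place of Liouville's equation, and the identical chain of inequalities goes through with $\fla$ in place of $f$.
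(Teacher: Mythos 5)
Your proposal follows essentially the same route as the paper: the paper sets $u=\log\fla$, invokes the averaged inequality $-\TTT u\geq\alpha\fla^2$ from the corollary to Proposition~\ref{83l}, rewrites it via the identity~\eqref{102b} as $-u''(t)\geq\frac{\zeta}{4}\alpha\fla^2$, and concludes with $\zeta\geq\rho$, with monotonicity referred back to the earlier integration argument. Your added justification of the vanishing first $t$-derivative at the center (the boundary term as $t\to-\infty$) and your remark on the $f$ versus $\fla$ ambiguity are points the paper passes over silently, but they do not change the argument.
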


Indeed, since the function~$u=\log \fla$ is rotationally invariant, we
can write~$-\TTT u \geq \alpha \fla^2$, or
\begin{equation}
\label{111}
- \frac{4}{\zeta} u''(t) \geq \alpha \fla^2 
\end{equation}
by \eqref{102b}.  Monotonicity has already been checked in
the Lemma above.  Inequality~\eqref{111} takes the form~$-\frac{d^2
u}{dt^2} \geq \frac{\zeta}{4} \alpha f^2$.

Can the concavity be used to obtain an estimate for the~$t$-variance?
Can one relate the~$t$-variance and the~$\zeta$-variance?

\end{document}